\documentclass[11pt]{article}
\usepackage[margin=1.1in]{geometry}
\usepackage{amsmath,amssymb,amsthm}
\usepackage{booktabs}
\usepackage{array}
\usepackage[hidelinks]{hyperref}
\usepackage{orcidlink}

\newtheorem{theorem}{Theorem}
\newtheorem{corollary}[theorem]{Corollary}
\newtheorem*{corollary*}{Corollary}
\theoremstyle{remark}

\newcommand{\AC}{\sim_{\mathrm{AC}}}
\newcommand{\dGS}{d_{\mathrm{GS}}}
\newcommand{\MS}{\mathrm{MS}}
\newcommand{\AK}{\mathrm{AK}}
\newcommand{\PP}[1]{P_{#1}}
\newcommand{\code}[1]{\texttt{#1}}

\title{Machine-checkable equivalence certificates\\
at the length-14 Andrews--Curtis frontier}
\author{Josep Carreras\,\orcidlink{0000-0003-4063-3243}\thanks{Independent
Researcher. Email: \texttt{joe.carr.data@gmail.com}. ORCID:
\texttt{0000-0003-4063-3243}. This work was carried out in collaboration with
AI systems; in line with publisher and preprint-server policies, AI systems are not
listed as authors, and their contributions are described in the disclosure statement
(\S\ref{sec:repro}).}}
\date{July 22, 2026}

\begin{document}
\maketitle

\begin{abstract}
In rank 2, unconditional verification of the Andrews--Curtis conjecture stands
at total relator length 12: every balanced presentation of the trivial group
on two generators with total length at most 12 is AC-trivializable
(Miasnikov--Myasnikov); at length 13, every such presentation is
AC-trivializable or AC-equivalent to the Akbulut--Kirby presentation
$\AK(3)$ --- itself of length 13 and open (Havas--Ramsay). At length 14, the
reinforcement-learning campaign of Shehper et al.\ reduced the Miller--Schupp
family to six hard presentations: four stated to be AC-equivalent to $\AK(3)$
--- with no move sequences published --- and two left unresolved. We prove
four explicit AC-equivalences among these six, as machine-checkable
elementary-move certificates replayable in under one second by a small
dependency-free verifier: (1)~the classical candidate
$\PP{1} = \langle x,y \mid x^{-1}y^2x=y^3,\ y^{-1}x^2y=x^3\rangle$ (Johnson 1980; an
AC-candidate since Burns--Macedo\'nska 1993, credited to D.~Solitar), which
equals $\MS(2, x^{-2}y^{-1}x^2y)$ up to rotation, is AC-equivalent to the
unresolved presentation $\MS(2, yx^2y^{-1}x^{-2})$, in 36 packaged moves (58
classical); (2)~the other unresolved presentation $\MS(2, yx^2yx^{-2})$ is
AC-equivalent to $\MS(2, x^{-2}y^{-1}x^2y^{-1})$, in 85 packaged moves (155
classical); (3)~$\MS(3, yx^2y)$ is AC-equivalent to $\AK(3)$, in 66 packaged
moves (101 classical); (4)~$\MS(3, y^{-1}x^2y^{-1})$ is AC-equivalent to
$\AK(3)$, in 13 packaged moves (22 classical). Theorems 3--4 are, to our
knowledge, the first public explicit certificates for any of the four
$\AK(3)$-equivalences stated without proof by Shehper et al.; they render the
$\MS(3)$ branch of the length-14 collapse unconditional. The pairs of
Theorems 1--2 are exchanged by the automorphism $\sigma\colon x\mapsto x$,
$y\mapsto y^{-1}$, so those theorems state that $\sigma$ is AC-realized on
the two remaining open classes --- the analogue, at the open frontier, of
Panteleev--Ushakov's realization of $\operatorname{Aut}(F_2)$ on $\AK(n)$.
The certified connected blocks of the six-case benchmark are now
$\{\MS(3,yx^2y),\, \MS(3,y^{-1}x^2y^{-1}),\, \AK(3)\}$,
$\{\PP{1},\, \MS(2,yx^2y^{-1}x^{-2})\}$, and
$\{\MS(2,yx^2yx^{-2}),\, \MS(2,x^{-2}y^{-1}x^2y^{-1})\}$; the full collapse
to $\AK(3)$ now depends on exactly the two remaining uncertified $\MS(2)$
claims of Shehper et al. We complement the certificates with a
computer-assisted exhaustive minimax analysis of the canonical
substitution-move (GS) graph: the GS bottleneck distance from $\MS(3, yx^2y)$
to $\AK(3)$ is exactly 19, while any GS path joining
$\MS(2, x^{-2}y^{-1}x^2y)$ or $\MS(2, x^{-2}y^{-1}x^2y^{-1})$ to $\AK(3)$
--- or to each other --- must reach total relator length at least 27
(exhaustions of 6{,}212{,}968 and 13{,}504{,}944 canonical states). We
further analyze the public classification table of the ``Two-Hump'' campaign
(Fagan et al.), derive a $\sigma$-merge program of 214 explicit candidate
class mergers, and commit a membership audit of their AC-19 solved set
confirming all six benchmark presentations and $\AK(3)$ absent. All
certificates, both search engines, the verifier, and one-command reproduction
scripts are archived at \url{https://github.com/joe-carr-data/ac-certificates} (tag \code{v1.0}) and
archived at \url{https://doi.org/10.5281/zenodo.21499081} (DOI \code{10.5281/zenodo.21499081}).
\end{abstract}

\section{Introduction}\label{sec:intro}

A balanced presentation $\langle x_1,\dots,x_n \mid r_1,\dots,r_n\rangle$ of
the trivial group is \emph{AC-trivializable} if it can be transformed into
$\langle x_1,\dots,x_n \mid x_1,\dots,x_n\rangle$ by a sequence of elementary
moves: replacing some $r_i$ by $r_i^{-1}$, by $r_i r_j^{\pm1}$ ($j\neq i$),
or by $w r_i w^{-1}$ for a word $w$. The Andrews--Curtis conjecture
\cite{AC65} asserts every balanced presentation of the trivial group is
AC-trivializable; it has stood open since 1965, with connections to the
Zeeman conjecture and to 4-manifold topology via the Akbulut--Kirby
presentations $\AK(n) = \langle x,y \mid x^n = y^{n+1},\ xyx = yxy\rangle$
\cite{AK85}.

\paragraph{The rank-2 frontier.}
Miasnikov--Myasnikov \cite{MM03} proved that every balanced two-generator
presentation of the trivial group with total relator length $\le 12$ is
AC-trivializable. Havas--Ramsay \cite{HR03} showed that at length 13 every
such presentation is AC-trivializable or AC-equivalent to $\AK(3)$; since
$\AK(3)$ itself has total length 13 and remains open, $\AK(3)$ is the unique
minimal potential counterexample, and unconditional verification stops at 12.
The length-14 stratum has never been exhaustively classified. Within the
Miller--Schupp family \cite{MS99}
$\MS(n, w) = \langle x,y \mid x^{-1}y^{n}xy^{-(n+1)},\ x^{-1}w\rangle$,
Shehper et al.\ \cite[\S3.3]{Shehper24} report six length-14 presentations
that resisted their greedy search:
\begin{itemize}
\item $\MS(2, x^{-2}y^{-1}x^2y)$ and $\MS(2, x^{-2}y^{-1}x^2y^{-1})$,
\item $\MS(3, yx^2y)$ and $\MS(3, y^{-1}x^2y^{-1})$ \quad (the equal-sign
      $\MS(3)$ cases),
\item $\MS(2, yx^2yx^{-2})$ and $\MS(2, yx^2y^{-1}x^{-2})$.
\end{itemize}
They state that the first four are AC-equivalent to $\AK(3)$; the last two
are left unresolved (``the holdouts''). Crucially, \emph{no move sequences
for the four stated $\AK(3)$-equivalences are public}: the paper prints two
unrelated paths, and an audit of the released repository, its history, the
associated Weights \& Biases artifacts, the successor repository (ACSolverX),
and recorded talks found no ledgers (audit scope and date:
\S\ref{sec:repro}; an unindexed or private artifact cannot be excluded). The
determination that the six failures are exactly the presentations listed ---
in particular that the repeated $\MS(3)$ signs are correlated --- follows
from the paper's count together with its released benchmark data, in which
the two cross-sign $\MS(3)$ cases are recorded as greedy-solved.
\textbf{Theorems \ref{thm:ms3a}--\ref{thm:ms3b} below supply explicit public
certificates for the two $\MS(3)$ claims; the two $\MS(2)$ claims remain
uncertified.}

Outside the Miller--Schupp family (up to the identification below), the
classical presentation
\[
\PP{1} \;=\; \langle x, y \mid x^{-1}y^2x = y^3,\ y^{-1}x^2y = x^3\rangle
\]
appears in Johnson's 1980 lecture notes \cite[p.~41]{Johnson80} and was posed
as an Andrews--Curtis candidate --- credited to D.~Solitar --- as ``Possible
Counterexample~1'' of the Burns--Macedo\'nska survey \cite{BM93}. As relator
words, $\PP{1} = (x^{-1}y^2xy^{-3},\ y^{-1}x^2yx^{-3})$, and its second
relator is a cyclic rotation of $x^{-3}y^{-1}x^2y$; hence $\PP{1}$ coincides,
up to rotation, with $\MS(2, x^{-2}y^{-1}x^2y)$ --- the first of Shehper et
al.'s stated $\AK(3)$ cases. The ``Two-Hump'' campaign \cite{Fagan26}
enumerated all 213{,}946 balanced presentations of total length $\le 19$ and
released AC-19, a solved set of over 140{,}000 presentations; our committed
audit (\S\ref{sec:repro}) confirms under their own canonicalizer that all six
benchmark presentations and $\AK(3)$ are absent from AC-19 and from its
extended release. Thus, within the named benchmarks and our audit scope, the
length-14 presentations with no public resolution were, before this work, the
six benchmark cases; Theorems \ref{thm:ms3a}--\ref{thm:ms3b} resolve the two
$\MS(3)$ cases into $\AK(3)$'s class.

\paragraph{Contributions.}
\begin{enumerate}
\item Four explicit AC-equivalence certificates (Theorems
      \ref{thm:sigma1}--\ref{thm:ms3b}): the two $\sigma$-realization
      theorems on the open $\MS(2)$ classes, and the first public
      certificates connecting both $\MS(3)$ cases to $\AK(3)$ --- making the
      $\MS(3)$ branch of the length-14 collapse unconditional
      (Corollary~\ref{cor:collapse}'s dependency shrinks to the two $\MS(2)$
      claims).
\item A computer-assisted exhaustive minimax analysis of the canonical
      GS-substitution graph: $\dGS(\MS(3,yx^2y), \AK(3)) = 19$ exactly, while
      every GS path joining $\MS(2,x^{-2}y^{-1}x^2y)$ or
      $\MS(2,x^{-2}y^{-1}x^2y^{-1})$ to $\AK(3)$, or to each other, must
      reach total length $\ge 27$ --- a quantified asymmetry between the
      ``easy'' and ``hard'' halves of the benchmark (\S\ref{sec:minimax}).
\item An analysis of the public Two-Hump classification table (not reproduced
      by any of three tested symmetry quotients; fingerprints consistent with
      unpublished computed connections), a $\sigma$-merge program of 214
      explicit candidate class mergers, and a committed AC-19 membership
      audit (\S\ref{sec:twohump}, \S\ref{sec:repro}).
\item Verified negative results and computational practice notes, including
      a Prover9 pitfall (\code{auto\_denials}) that makes disjunctive-goal
      reachability encodings report SEARCH FAILED despite finding proofs of
      the reachable disjuncts (\S\ref{sec:negative}).
\end{enumerate}
Every positive AC-equivalence and AC-trivialization claim in this paper ships
as a machine-replayable JSON move-ledger checked by an independent verifier;
the analysis claims of \S\ref{sec:minimax}--\S\ref{sec:twohump} regenerate
from committed scripts with hard validation gates; \S\ref{sec:repro} gives
one-command reproduction.

\section{Moves, conventions, and certificates}\label{sec:conventions}

\paragraph{Words and presentations.}
Relators are freely reduced words over $\{x, y, x^{-1}, y^{-1}\}$. In the
machine-readable artifacts they are encoded as integer sequences ($1 = x$,
$2 = y$, negatives for inverses); throughout the text we write them as words.
A state is an ordered pair $(r_1, r_2)$.

\paragraph{Elementary moves.}
Our ledgers use: (i)~inversion $r_i \leftarrow r_i^{-1}$;
(ii)~multiplication $r_i \leftarrow r_i r_j^{\pm1}$ ($i \neq j$);
(iii)~conjugation by a single generator $r_i \leftarrow g r_i g^{-1}$,
$g \in \{x, y, x^{-1}, y^{-1}\}$; and (iv)~cyclic rotation of $r_i$ by $k$
positions, packaged as one move. Left rotation of a cyclically reduced
relator $r = uv$ ($u$ the length-$k$ prefix) equals conjugation by $u^{-1}$:
$u^{-1}(uv)u = vu$. \emph{Both accountings are reported everywhere}:
``packaged'' counts each move above as 1; ``classical'' counts a rotation of
a length-$L$ relator by $k$ as $\min(k \bmod L,\ L - k \bmod L)$
single-generator conjugations --- the shorter direction --- evaluated
against the relator's length \emph{at move time}, and every other move as 1.
(An earlier draft computed rotation costs against initial relator lengths;
adversarial review caught this, the verifier now tracks state lengths, and a
regression test pins the corrected counts.)

\paragraph{Orbit convention.}
The symmetry group $\Sigma$ generated by (a)~swapping $r_1 \leftrightarrow
r_2$, (b)~inverting either relator, and (c)~cyclically rotating either
relator, consists entirely of AC-realizable operations: swap is realized by
a 6-move elementary identity, inversion is move~(i), rotation is move~(iv).
An \emph{equivalence certificate} for $P \AC Q$ is a ledger transforming $P$
into some member of $\Sigma\cdot Q$; the verifier checks membership in the
full orbit explicitly. All equivalences claimed below are therefore genuine
AC-equivalences of the named presentations.

\paragraph{Certificates and verifier.}
A certificate is a JSON file with fields \code{initial}, \code{target},
\code{claim}, and \code{moves}. The verifier \code{ac\_verify.py} (small,
standard library only) re-derives every move in exact integer arithmetic,
trusts nothing about how a ledger was produced, validates the shape and
alphabet of every word field strictly, and in equivalence mode checks that
the terminal state equals the recorded \code{reached} state \emph{and} that
it lies in $\Sigma\cdot\code{target}$. The verifier was written and
self-tested before any search (``certificate-first'' discipline); its
self-test covers trivialization and equivalence modes, corrupted-ledger and
malformed-field rejection, cycle-move accounting, and a classical-count
regression.

\section{Four equivalence theorems}\label{sec:theorems}

Throughout, $\MS(2, w) = \langle x,y \mid x^{-1}y^2xy^{-3},\ x^{-1}w\rangle$
and $\MS(3, w) = \langle x,y \mid x^{-1}y^3xy^{-4},\ x^{-1}w\rangle$. The
six benchmark presentations (each of total length 14):

\begin{center}
\small
\begin{tabular}{clll}
\toprule
\# & Presentation & Status in \cite{Shehper24} & Status here \\
\midrule
$\PP{1}$ & $\MS(2, x^{-2}y^{-1}x^2y)$\,$^{\dagger}$ &
  claimed $\AK(3)$-equiv.\ (no proof) & $\AC \PP{6}$ (Thm~\ref{thm:sigma1}) \\
$\PP{2}$ & $\MS(2, x^{-2}y^{-1}x^2y^{-1})$ &
  claimed $\AK(3)$-equiv.\ (no proof) & $\AC \PP{5}$ (Thm~\ref{thm:sigma2}) \\
$\PP{3}$ & $\MS(3, yx^2y)$ &
  claimed $\AK(3)$-equiv.\ (no proof) & $\AC \AK(3)$ (Thm~\ref{thm:ms3a}) \\
$\PP{4}$ & $\MS(3, y^{-1}x^2y^{-1})$ &
  claimed $\AK(3)$-equiv.\ (no proof) & $\AC \AK(3)$ (Thm~\ref{thm:ms3b}) \\
$\PP{5}$ & $\MS(2, yx^2yx^{-2})$ & unresolved holdout &
  $\AC \PP{2}$ (Thm~\ref{thm:sigma2}) \\
$\PP{6}$ & $\MS(2, yx^2y^{-1}x^{-2})$ & unresolved holdout &
  $\AC \PP{1}$ (Thm~\ref{thm:sigma1}) \\
\bottomrule
\end{tabular}

\smallskip
{\footnotesize $^{\dagger}$\,equals the classical presentation
$\langle x,y \mid x^{-1}y^2x=y^3,\ y^{-1}x^2y=x^3\rangle$ up to cyclic
rotation of its second relator.}
\end{center}

\begin{theorem}\label{thm:sigma1}
The presentations $\PP{1} = \langle x,y \mid x^{-1}y^2x=y^3,\
y^{-1}x^2y=x^3\rangle$ and $\PP{6} = \MS(2, yx^2y^{-1}x^{-2})$ are
AC-equivalent.
\end{theorem}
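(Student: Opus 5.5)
The equivalence will be proved by an explicit certificate: a finite sequence of elementary moves (i)--(iv) of \S\ref{sec:conventions} carrying $\PP{1}$ to a member of the symmetry orbit $\Sigma\cdot\PP{6}$. By the orbit convention, this suffices for a genuine AC-equivalence. Concretely, I would start from the relator pair $(x^{-1}y^2xy^{-3},\ x^{-3}y^{-1}x^2y)$, which is $\PP{1}$ with its second relator rotated. The target is $\PP{6}=(x^{-1}y^2xy^{-3},\ x^{-1}yx^2y^{-1}x^{-2})$.

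The structural observation guiding the search is the following. Under $\sigma\colon y\mapsto y^{-1}$, the second relator of $\PP{1}$ goes to $x^{-3}yx^2y^{-1}$, which is a rotation of $x^{-1}yx^2y^{-1}x^{-2}$, the second relator of $\PP{6}$. Meanwhile the first relator becomes $x^{-1}y^{-2}xy^{3}$, which is not a $\Sigma$-image of $x^{-1}y^2xy^{-3}$. So the task is to realize $\sigma$ on the first relator while keeping the second fixed modulo $\Sigma$. Equivalently, we must show that, in the presence of the second relator, $x^{-1}y^2xy^{-3}$ can be traded for $x^{-1}y^{-2}xy^{3}$. Note that both express that $x$ conjugates $y^2$ to $y^3$ (respectively $y^{-2}$ to $y^{-3}$). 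Since the inverse of the second is $y^{-3}x^{-1}y^2x$, conjugate to $x^{-1}y^2 x y^{-3}$, they define the same normal closure in $F_2$ once the relators are known to generate the trivial group. However, AC-equivalence requires the explicit move chain, not just equality of normal closures.

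To find the chain, I would run a bidirectional breadth-first or greedy search over states up to $\Sigma$-canonical form. Moves are restricted to multiplications, inversions, single-letter conjugations and rotations, under a total-length cap (say around 24--28). The search grows from $\PP{1}$ and from the canonical orbit representative of $\PP{6}$ until the frontiers meet. The two half-paths are then concatenated, with the backward half inverted. This is valid because each elementary move has an elementary inverse: multiplication by $r_j^{\mp1}$, inversion, and the opposite conjugation or rotation. Finally I would append the normalizing moves into $\Sigma\cdot\PP{6}$ and record the result as a JSON ledger. The proof then consists of replaying the ledger with \code{ac\_verify.py}, which checks every intermediate word in exact arithmetic and confirms that the terminal state lies in the target orbit. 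I would also report the packaged and classical move counts.

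The main obstacle is the search itself. The minimax analysis later in the paper suggests that paths between these hard $\MS(2)$ classes must pass through relatively long intermediate relators. A pure length-bounded BFS may therefore blow up before the frontiers meet. I would first try the bidirectional search with the length cap raised gradually, using the $\sigma$-symmetry to prune: any state $S$ reached from $\PP{1}$ whose image $\sigma(S)$ is already reached from $\PP{6}$'s side gives a meeting point. If that fails, I would fall back to a heuristic (greedy or beam) search scored by total length, accepting a longer certificate, since only verifiability matters, not minimality.
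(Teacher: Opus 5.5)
Your overall strategy matches the paper's: find an explicit elementary-move ledger by search, then let \code{ac\_verify.py} replay it. The paper's proof is just such a certificate, \code{cert\_sym\_equiv\_orphan1.json} (36 packaged / 58 classical moves, peak total length 27), found by greedy GS search and expanded by orbit-bridging BFS. However, the structural analysis meant to guide your search is wrong, and the search step you build on it would fail.

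\emph{The reformulation.} $\sigma(x^{-1}y^2xy^{-3}) = x^{-1}y^{-2}xy^{3}$ \emph{is} a $\Sigma$-image of $r_1 = x^{-1}y^2xy^{-3}$. Indeed $r_1^{-1} = y^3x^{-1}y^{-2}x$, and rotating it by three letters gives $x^{-1}y^{-2}xy^3$. Your own next sentence computes the same thing, since the inverse $y^{-3}x^{-1}y^2x$ is a rotation of $r_1$. (The two normal closures also coincide unconditionally; no triviality hypothesis is needed.) Combined with your correct computation for the second relator, this gives $\sigma(\PP{1}) \in \Sigma\cdot\PP{6}$ outright, as in the paper's $\sigma$-realization corollary. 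So the content of the theorem is that the automorphism $\sigma$ is AC-realized on the class of $\PP{1}$. Your ``equivalent'' task of trading $r_1$ for $\sigma(r_1)$ in the presence of the second relator is not equivalent. That trade costs one inversion plus one rotation, so if it were equivalent the theorem would be trivial. The slip is treating the application of $\sigma$ to the whole presentation as free. It is not an AC move, and realizing it is the entire problem.

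\emph{The pruning rule.} Your $\sigma$-pruning rule fails for the same reason. If $S$ is reached from $\PP{1}$, applying $\sigma$ to the move chain shows that $\sigma(S)$ is reachable from $\sigma(\PP{1}) \in \Sigma\cdot\PP{6}$. So ``$\sigma(S)$ is reached from $\PP{6}$'s side'' holds for every forward state, already for $S=\PP{1}$. It yields only $\PP{1}\AC S$ and $\PP{6}\AC\sigma(S)$, which connects nothing unless $S\AC\sigma(S)$, which is the original problem again. The backward search from $\PP{6}$ is simply the $\sigma$-mirror of the forward one; the paper uses exactly this fact to halve the cost of its $\sigma$-merge sweep. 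The correct use of the symmetry is to search forward from $\PP{1}$ only and stop when some $S$ and $\sigma(S)$ both lie, up to $\Sigma$, in the forward set. Then $\PP{1}\AC\sigma(S)\AC\sigma(\PP{1})\in\Sigma\cdot\PP{6}$. With the rule as you wrote it, the concatenated ledger breaks at the junction and the verifier rejects it: soundness survives, but the step does not work.

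If you drop the reformulation and the pruning rule, what remains (genuine collisions, then replay) is the paper's proof. Your worry about the minimax barriers is also misplaced: those bounds concern joins between different blocks, not the $\PP{1}$--$\PP{6}$ pair. Since the known certificate peaks at total length 27, though, the low end of your proposed cap range may not suffice.
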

\begin{proof}
Certificate \code{cert\_sym\_equiv\_orphan1.json}: 36 packaged moves (58
classical), elementary peak total length 27 (single relator 17). Replay
$< 1$~s.
\end{proof}

\begin{theorem}\label{thm:sigma2}
$\PP{5} = \MS(2, yx^2yx^{-2})$ is AC-equivalent to
$\PP{2} = \MS(2, x^{-2}y^{-1}x^2y^{-1})$.
\end{theorem}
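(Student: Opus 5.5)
The plan is to produce, exactly as for Theorem~\ref{thm:sigma1}, an explicit elementary-move certificate and have \code{ac\_verify.py} check it in equivalence mode. We start from $\PP{5}=(x^{-1}y^2xy^{-3},\ x^{-1}yx^2yx^{-2})$ and aim to reach some member of $\Sigma\cdot\PP{2}$, where $\PP{2}=(x^{-1}y^2xy^{-3},\ x^{-3}y^{-1}x^2y^{-1})$. Because of the orbit convention of \S\ref{sec:conventions}, it is enough to hit any rotation, inversion or swap of the target pair.

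The automorphism $\sigma\colon y\mapsto y^{-1}$ guides the search. Applied to $\PP{2}$ it gives a pair whose second relator $x^{-3}yx^2y$ is a rotation of $x^{-1}yx^2yx^{-2}$, so $\sigma(\PP{2})$ and $\PP{5}$ share their second relator up to $\Sigma$. The first relators differ: $\sigma$ sends $x^{-1}y^2xy^{-3}$ to $x^{-1}y^{-2}xy^{3}$. So the theorem amounts to showing that the $\sigma$-twist of the Baumslag--Solitar-type relator $x^{-1}y^2xy^{-3}$ can be undone by AC-moves in the presence of the second relator.

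The first idea is to use the second relator to rewrite powers of $y$ in terms of $x$, in the spirit of Panteleev--Ushakov's realization of $\operatorname{Aut}(F_2)$ on $\AK(n)$. I would then move $x^{-1}y^{-2}xy^3$ to $x^{-1}y^2xy^{-3}$ through conjugates, checking each step by hand. If that does not close up, I would fall back on the bidirectional search engine used for Theorem~\ref{thm:sigma1}. That means a breadth- or greedy-first search over canonical states, from $\PP{5}$ forward and from the $\Sigma$-orbit of $\PP{2}$ backward, with a total-length cap raised progressively from $16$ upward until the two frontiers meet. The meeting path is then expanded into packaged moves: inversions, multiplications, single-letter conjugations and rotations.

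The main obstacle is length. By the minimax analysis of \S\ref{sec:minimax}, any GS path involving $\PP{2}$ must reach total length at least $27$, so the search has to explore a large region above length $25$. I therefore expect a longer ledger than in Theorem~\ref{thm:sigma1}, on the order of $80$--$90$ packaged moves. To keep the search tractable I would quotient states by $\Sigma$ and by the free-group automorphism $x\leftrightarrow x^{-1}$ only where that is AC-safe, and use the peak length as a pruning bound. Once a path is found, the proof consists of the certificate file together with its sub-second replay by the verifier, and the classical count is computed with rotation costs evaluated against the relator lengths at move time.
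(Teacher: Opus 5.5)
Your fallback (search, expand into elementary moves, replay in \code{ac\_verify.py}) is the paper's route, but the analysis meant to steer it is wrong. You computed $\sigma(x^{-1}y^2xy^{-3})=x^{-1}y^{-2}xy^{3}$. The inverse of that word, $y^{-3}x^{-1}y^2x$, is a cyclic rotation of $x^{-1}y^2xy^{-3}$. So $\sigma$ fixes the first relator up to $\Sigma$, and $\sigma(\PP{2})$ already lies in $\Sigma\cdot\PP{5}$; this is what the paper means by the pair being exchanged by $\sigma$ up to $\Sigma$. There is therefore no ``$\sigma$-twist of the Baumslag--Solitar relator'' to undo. That step is one inversion and one rotation, and even carried out it gives only $\sigma(\PP{2})\AC\PP{5}$, which is not the theorem. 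In fact $\PP{2}$ and $\PP{5}$ share the first relator $x^{-1}y^2xy^{-3}$ verbatim. They differ only in the second relator: $x^{-3}y^{-1}x^2y^{-1}$ versus (a rotation of) its $\sigma$-image $x^{-3}yx^2y$. The theorem is exactly $\PP{2}\AC\sigma(\PP{2})$. Concretely, the signs of both $y$-letters of the second relator must be flipped by AC-moves using the first relator, in a class not known to be AC-trivial. Your hand approach targets the wrong relator and never engages with this. Similarly, the minimax theorem does not say that every GS path involving $\PP{2}$ reaches length $27$. It bounds only paths from $\PP{2}$ to $\AK(3)$ and to $\PP{1}$, and the paper explicitly declines to transfer $\dGS$ bounds to the block-mate $\PP{5}$. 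So it says nothing about the $\PP{2}$--$\PP{5}$ barrier.

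The decisive gap, though, is that the proposal contains no proof object. There is no known a priori bound on the length or peak of an AC-sequence between these presentations. A search whose cap is ``raised progressively until the two frontiers meet'' is therefore not guaranteed to terminate, and nothing is established until an explicit ledger is exhibited and replays. The paper's proof is precisely such a ledger: \code{cert\_idx0\_equiv\_mirror.json}, 85 packaged (155 classical) moves, with elementary peak total length 33 (single relator 21). It was found by greedy search on the canonical GS graph and expanded by orbit-bridging breadth-first search into elementary moves reaching an explicit $\Sigma$-orbit member of the target. The verifier checks it in under a second. A forecast of ``80--90 moves'' cannot stand in for it. Finally, keep the state quotient to $\Sigma$. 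Quotienting by an automorphism not already known to be AC-realized here is unsound. Quotienting by $\sigma$ in particular would be circular, since it identifies $\PP{2}$ with $\PP{5}$ outright.
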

\begin{proof}
Certificate \code{cert\_idx0\_equiv\_mirror.json}: 85 packaged moves (155
classical), elementary peak 33 (single relator 21). Replay $< 1$~s.
\end{proof}

\begin{theorem}\label{thm:ms3a}
$\PP{3} = \MS(3, yx^2y)$ is AC-equivalent to $\AK(3)$.
\end{theorem}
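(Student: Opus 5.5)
The proof will be a certificate: an explicit finite ledger of elementary moves (i)--(iv) of \S\ref{sec:conventions} carrying the relator pair of $\PP{3}=(x^{-1}y^3xy^{-4},\ x^{-1}yx^2y)$ to a member of the orbit $\Sigma\cdot\AK(3)$. Here $\AK(3)=(x^3y^{-4},\ xyxy^{-1}x^{-1}y^{-1})$. By the orbit convention, every element of $\Sigma$ is AC-realizable, so reaching any member of $\Sigma\cdot\AK(3)$ gives a genuine AC-equivalence. Validity is then checked by replaying the ledger in \code{ac\_verify.py}.

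The construction of the ledger proceeds in four steps.

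First, I will exploit the exact result of \S\ref{sec:minimax}, namely $\dGS(\PP{3},\AK(3))=19$. A bidirectional search in the canonical GS-substitution graph, with states quotiented by $\Sigma$ and total length capped at $19$, therefore has a solution. I will run a meet-in-the-middle BFS from both endpoints under this cap and extract a concrete path of canonical states.

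Second, I will expand each GS edge into elementary moves. A substitution $r_i\leftarrow$ (reduced form of $r_i$ with an occurrence of $r_j^{\pm1}$, or a cyclic conjugate of it, spliced in) is realized by three moves:
\[
\text{a rotation of } r_i \text{ to bring the splice point to the end}, \qquad r_i\leftarrow r_ir_j^{\pm1} \text{ with } r_j \text{ suitably rotated}, \qquad \text{a rotation back.}
\]
The intermediate rotations of $r_j$ must themselves be recorded and, where needed, undone. Canonicalization steps between consecutive GS states are realized by the corresponding $\Sigma$-operations: swap via the 6-move identity, inversion via move (i), rotation via move (iv).

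Third, I will post-process the ledger. Adjacent rotations on the same relator will be merged, inverse pairs cancelled, and the final state normalized only up to $\Sigma$, since the verifier accepts any orbit member. I will then report both the packaged and the classical count, with rotation costs evaluated against relator lengths at move time.

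The main obstacle is the second step. Canonical GS states forget which rotation and orientation representative is in use, so each edge must be lifted to the actual ordered, uncanonicalized pair currently held. This requires tracking the $\Sigma$-element relating the held pair to the canonical representative and composing these elements along the path; an off-by-one in the rotation offset silently produces a wrong word.

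To guard against such errors, I will replay the ledger in the verifier after each lifted edge rather than only at the end. If lifting ever becomes awkward, a fallback is available: run an ordinary elementary-move search (greedy descent toward length $13$ with a length cap of roughly $20$). Once length $13$ is reached, Havas--Ramsay \cite{HR03} guarantees the state is AC-trivializable or equivalent to $\AK(3)$. For the final certificate, though, I need an explicit match to $\Sigma\cdot\AK(3)$, so the search target will be orbit membership checked by the verifier, not merely reaching length $13$.
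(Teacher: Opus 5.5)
Your plan is essentially the paper's route. The paper's proof is exactly such a certificate (\code{bottleneck\_ak\_f3\_cert.json}, 66 packaged moves): a barrier-19 path found by bidirectional bottleneck search in the canonical GS graph, expanded into elementary moves, and replayed by \code{ac\_verify.py}. The differences are cosmetic: the paper's ledger runs from $\AK(3)$ to $\Sigma\cdot\PP{3}$, and it bridges consecutive canonical states by a breadth-first orbit-bridging search rather than your rotate/multiply/rotate lift, which would also need occasional single-generator conjugations to restore cyclic reduction after cancellation at the wrap-around junction.
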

\begin{proof}
Certificate \code{bottleneck\_ak\_f3\_cert.json}: 66 packaged moves (101
classical), transforming $\AK(3)$ into a $\Sigma$-orbit member of
$\MS(3, yx^2y)$; elementary peak 24 (single relator 15). A second,
independently found certificate (\code{cert\_ms3\_yx2y\_equiv\_ak3.json}, 71
moves, elementary peak 22) proves the same statement. Replay $< 1$~s each.
\end{proof}

\begin{theorem}\label{thm:ms3b}
$\PP{4} = \MS(3, y^{-1}x^2y^{-1})$ is AC-equivalent to $\AK(3)$.
\end{theorem}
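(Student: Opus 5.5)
The proof will be a certificate, like those of Theorems~\ref{thm:sigma1}--\ref{thm:ms3a}: an explicit ledger of elementary moves (i)--(iv) that takes $\AK(3)=(x^3y^{-4},\ xyxy^{-1}x^{-1}y^{-1})$ (or, equivalently, $\PP{4}$) to a member of the $\Sigma$-orbit of the other presentation, replayed by \code{ac\_verify.py}. Since the abstract records 13 packaged moves (22 classical), I expect the path to be short and to admit a human-readable description.

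\textbf{Reuse Theorem~\ref{thm:ms3a}.} The first idea I would try is to transport that certificate. $\PP{3}=(x^{-1}y^3xy^{-4},\ x^{-1}yx^2y)$ and $\PP{4}=(x^{-1}y^3xy^{-4},\ x^{-1}y^{-1}x^2y^{-1})$ share their first relator. Their second relators are exchanged by $y\mapsto y^{-1}$, but that map does not fix the first relator, so no symmetry takes $\PP{3}$ to $\PP{4}$ for free. A composite symmetry is more promising: inversion, for instance, turns $x^{-1}y^{-1}x^2y^{-1}$ into $yx^{-2}yx$. Combined with an automorphism and rotations, such a composite might send $\PP{4}$ to a presentation already on the certified path of Theorem~\ref{thm:ms3a}.

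\textbf{Direct search.} If no such shortcut exists, I would run a bidirectional breadth-first search in the canonical GS graph between the $\Sigma$-orbits of $\PP{4}$ and $\AK(3)$, with a length cap around 22--24. First I would apply a few substitutions to $\PP{4}$ that shorten it: use the second relator to rewrite $x$ in terms of $y$, then substitute that into $y^3xy^{-4}$. The aim is a state whose relators resemble $x^3=y^4$ and a braid-type relator $xyx=yxy$. Once a meeting state is found, I would expand each GS substitution into elementary multiplications and conjugations, count rotation costs against the lengths at move time, and record the peak length.

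\textbf{Main obstacle.} The hard part is the middle of the search: finding a meeting state under a modest length cap. The paper's minimax data show that the related case $\PP{3}$ needs GS bottleneck 19, so an unrestricted search could blow up. To control this I would search from both ends with canonical orbit quotients, and first try caps slightly above 19.

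Once the ledger exists, the proof reduces to running the verifier. It replays the moves in exact integer arithmetic and checks that the terminal state lies in $\Sigma\cdot\code{target}$. Because every element of $\Sigma$ is AC-realizable, this establishes the AC-equivalence.
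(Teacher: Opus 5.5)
Your main route is the paper's. Theorem~\ref{thm:ms3b} is proved by a direct elementary-move ledger, \code{cert\_ms3\_yinvx2yinv\_equiv\_ak3.json} (13 packaged moves, 22 classical, peak total length 23, single relator 14), found by GS search plus orbit-bridging expansion and replayed by the verifier against the $\Sigma$-orbit of the target. Your write-up is a search plan, so the proof content is the ledger itself, which the paper supplies; your caps of 22--24 are consistent with its peak.

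One concrete slip in your ``reuse'' paragraph is worth correcting, because it hides a genuinely different argument. The map $\sigma\colon x\mapsto x,\ y\mapsto y^{-1}$ \emph{does} fix the first relator up to $\Sigma$. Indeed $\sigma(x^{-1}y^3xy^{-4}) = x^{-1}y^{-3}xy^{4}$, and this is a cyclic rotation of $(x^{-1}y^3xy^{-4})^{-1} = y^4x^{-1}y^{-3}x$. Hence $\sigma(\PP{3})\in\Sigma\cdot\PP{4}$, which is exactly the $\sigma$-pairing recorded in the paper's corollary.

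The valid way to exploit this is not to apply an automorphism to $\PP{4}$ and splice the result into the path of Theorem~\ref{thm:ms3a}. Automorphisms are not AC moves, so that step is unjustified. Instead, apply $\sigma$ letter by letter to the \emph{whole} ledger of Theorem~\ref{thm:ms3a}. Since $\sigma$ sends letters to letters, it commutes with inversion, multiplication, single-generator conjugation, rotation, free reduction and the $\Sigma$-operations, and it preserves all lengths. The image is therefore a valid ledger with the same move counts and peaks, running from $\sigma(\AK(3))$ to $\Sigma\cdot\PP{4}$, so $\PP{4}\AC\sigma(\AK(3))$. Panteleev--Ushakov's theorem that every automorphism of $F_2$ is AC-realized on $\AK(n)$, quoted in the paper's corollary, gives $\sigma(\AK(3))\AC\AK(3)$. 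So Theorem~\ref{thm:ms3b} follows from Theorem~\ref{thm:ms3a} with no new search.

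The two routes buy different things. The transport argument costs nothing and explains why the $\sigma$-pair $(\PP{3},\PP{4})$ behaves alike. Its last step, however, rests on an external theorem rather than on the verifier, unless you also produce a ledger for $\AK(3)\to\sigma(\AK(3))$. Any explicit ledger it yields is also much longer than the paper's. The paper's direct 13-move certificate is self-contained and checked end to end, independently of both Theorem~\ref{thm:ms3a} and Panteleev--Ushakov.
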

\begin{proof}
\sloppy
Certificate \code{cert\_ms3\_yinvx2yinv\_equiv\_ak3.json}: 13 packaged
moves (22 classical), elementary peak 23 (single relator 14). Replay
$< 1$~s.
\end{proof}

Verification data from a clean re-run (full hashes, commands, and expected
outputs: \code{MANIFEST.json}):

\begin{center}
\footnotesize
\setlength{\tabcolsep}{4.5pt}
\begin{tabular}{lccc}
\toprule
Certificate & SHA-256 (prefix) & Moves & Peak total / single \\
\midrule
\code{cert\_sym\_equiv\_orphan1.json} & \code{b8ead47e} & 36 (58) & 27 / 17 \\
\code{cert\_idx0\_equiv\_mirror.json} & \code{6f55a391} & 85 (155) & 33 / 21 \\
\code{bottleneck\_ak\_f3\_cert.json} & \code{8f16b97a} & 66 (101) & 24 / 15 \\
\code{cert\_ms3\_yx2y\_equiv\_ak3.json} & \code{9cfeff76} & 71 (106) & 22 / 15 \\
\code{cert\_ms3\_yinvx2yinv\_equiv\_ak3.json} & \code{20d9fd3d} & 13 (22) & 23 / 14 \\
\bottomrule
\end{tabular}
\end{center}

\begin{corollary*}[$\sigma$-realization at the frontier]
Let $\sigma \in \operatorname{Aut}(F_2)$ be $x\mapsto x$, $y\mapsto y^{-1}$.
The pairs $(\PP{1},\PP{6})$, $(\PP{5},\PP{2})$, and $(\PP{3},\PP{4})$ are
each exchanged by $\sigma$ up to $\Sigma$. Theorems
\ref{thm:sigma1}--\ref{thm:sigma2} therefore say that $\sigma$ is AC-realized
on the two open $\MS(2)$ classes, with explicit elementary-move witnesses ---
the frontier analogue of Panteleev--Ushakov's theorem \cite[\S3]{PU16} that
every automorphism of $F_2$ is AC-realized on $\AK(n)$; and Theorems
\ref{thm:ms3a}--\ref{thm:ms3b} show the third $\sigma$-pair lies in
$\AK(3)$'s class, where \cite{PU16} realizes all of
$\operatorname{Aut}(F_2)$.
\end{corollary*}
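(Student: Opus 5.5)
The plan is to verify the exchange claim by direct computation on relator words, and then read off the remaining assertions from Theorems~\ref{thm:sigma1}--\ref{thm:ms3b}. Since $\sigma$ is an automorphism of $F_2$, it acts relator-wise on presentations. The group $\Sigma$ is generated by swap, inversion and cyclic rotation, so to show that $\sigma$ exchanges a pair up to $\Sigma$ it suffices to show two things: the image under $\sigma$ of each relator of one presentation is, after an inversion and a rotation, the corresponding relator of the other, and the relator order is kept.

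First I handle the Miller--Schupp relator $r_1 = x^{-1}y^{n}xy^{-(n+1)}$, uniformly in $n$. Applying $\sigma$ gives
\[
\sigma(r_1)=x^{-1}y^{-n}xy^{n+1}, \qquad \sigma(r_1)^{-1}=y^{-(n+1)}x^{-1}y^{n}x .
\]
Rotating the last word by $n+1$ letters returns $x^{-1}y^{n}xy^{-(n+1)} = r_1$. So $r_1$ is fixed by $\sigma$ modulo $\Sigma$, for both $n=2$ and $n=3$.

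It remains to compare second relators $x^{-1}w$, after rotating each into a normal form starting with the $x$-block.
\begin{itemize}
\item For $\PP{1}$, the second relator is $x^{-3}y^{-1}x^2y$, and $\sigma$ sends it to $x^{-3}yx^2y^{-1}$. For $\PP{6}$, the relator $x^{-1}yx^2y^{-1}x^{-2}$ rotates to $x^{-3}yx^2y^{-1}$. These agree.
\item For $\PP{5}$, the relator $x^{-1}yx^2yx^{-2}$ rotates to $x^{-3}yx^2y$, and $\sigma$ sends this to $x^{-3}y^{-1}x^2y^{-1}$. This is exactly the second relator of $\PP{2}$.
\item For $\PP{3}$, $\sigma(x^{-1}yx^2y)=x^{-1}y^{-1}x^2y^{-1}$, which is the second relator of $\PP{4}$.
\end{itemize}
Because $\sigma$ is an involution, each of these statements also holds in the reverse direction. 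For $\PP{1}$ I use the footnoted identification of the classical form with $\MS(2,x^{-2}y^{-1}x^2y)$, which is itself a rotation and so lies in $\Sigma$.

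Next, the interpretive claims. ``$\sigma$ is AC-realized on a class'' means that a presentation $P$ in the class is AC-equivalent to $\sigma(P)$. By the computation above, $\sigma(\PP{1})\in\Sigma\cdot\PP{6}$. The orbit convention says every element of $\Sigma$ is AC-realizable, so Theorem~\ref{thm:sigma1} gives $\PP{1}\AC\sigma(\PP{1})$, and Theorem~\ref{thm:sigma2} gives $\PP{5}\AC\sigma(\PP{5})$ in the same way. The certificates themselves serve as the elementary-move witnesses. For the third pair, Theorems~\ref{thm:ms3a}--\ref{thm:ms3b} place $\PP{3}$ and $\PP{4}$ in the class of $\AK(3)$. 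The comparison with \cite[\S3]{PU16} is a citation, not something to prove here.

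I expect no real obstacle. The only step needing care is the bookkeeping of the first relator: one must check that inversion followed by rotation, and not some other element of $\Sigma$, brings $\sigma(r_1)$ back, and that the relator order is not swapped. I will do this symbolically for general $n$ as above, and then confirm all three pairs mechanically with the verifier's orbit-membership check.
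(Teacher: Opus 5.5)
Your proposal is correct and follows essentially the argument the paper implies (it gives no separate written proof): you check directly that $\sigma$ fixes the Miller--Schupp relator up to inversion and rotation and sends each second relator onto its partner's up to rotation, and you then read off AC-realization of $\sigma$ from Theorems~\ref{thm:sigma1}--\ref{thm:sigma2} using the AC-realizability of $\Sigma$. Your word computations for all three pairs check out. The reverse direction is also sound, since $\sigma$ permutes letters and therefore commutes with every element of $\Sigma$.
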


Panteleev--Ushakov's \S6.3 experiments found automorphic equivalences among
\emph{random} Miller--Schupp presentations (their Table~3); none of their
listed pairs involves the presentations above (checked against the full
text). To our knowledge no published work states any of Theorems
\ref{thm:sigma1}--\ref{thm:ms3b} with an explicit certificate.

\paragraph{Discovery provenance.}
All certificates were found by a three-stage pipeline: (a)~search on the
canonical substitution-move (``GS'') graph of \cite{Fagan26} --- greedy for
Theorems \ref{thm:sigma1}--\ref{thm:sigma2} and the 71-move variant of
Theorem~\ref{thm:ms3a} (which fell in seconds), and an exhaustive
bidirectional minimax (bottleneck-Dijkstra) search for the 66-move variant
(\S\ref{sec:minimax}); (b)~expansion of the resulting canonical-state path
into elementary moves by orbit-bridging breadth-first search (each
consecutive canonical pair is bridged by explicit elementary moves to an
explicit orbit member); (c)~independent replay by the verifier. The same
pipeline cross-validates two published Two-Hump trivializations at the
elementary level (\S\ref{sec:negative}).

\section{Certified blocks, and what remains conditional}\label{sec:blocks}

The certified connected blocks of the six-case benchmark are now (each block
is proven connected by the named certificates; we do not claim these are
maximal AC-components --- indeed Corollary~\ref{cor:collapse} concerns
evidence that they join):
\begin{align*}
C_0 &= \{\, \PP{3},\ \PP{4},\ \AK(3) \,\}
      && \text{(Theorems \ref{thm:ms3a}--\ref{thm:ms3b} --- unconditional)}\\
C_1 &= \{\, \PP{1},\ \PP{6} \,\}
      && \text{(Theorem \ref{thm:sigma1})}\\
C_2 &= \{\, \PP{5},\ \PP{2} \,\}
      && \text{(Theorem \ref{thm:sigma2})}
\end{align*}

\begin{corollary}[conditional on the two remaining claims of
{\cite[\S3.3]{Shehper24}}]\label{cor:collapse}
Assume, as stated but not certified in \cite[\S3.3]{Shehper24}, that
$\PP{1} = \MS(2, x^{-2}y^{-1}x^2y)$ and $\PP{2} = \MS(2,
x^{-2}y^{-1}x^2y^{-1})$ are AC-equivalent to $\AK(3)$. Then $C_1$ and $C_2$
join $C_0$, and all six hard length-14 Miller--Schupp presentations ---
including both holdouts and the classical candidate $\PP{1}$ --- lie in the
AC-class of $\AK(3)$. Within this six-case benchmark, the remaining question
is then exactly $\AK(3)$.
\end{corollary}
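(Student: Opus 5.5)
The corollary follows by transitivity of AC-equivalence, combining the two assumed claims with Theorems~\ref{thm:sigma1}--\ref{thm:ms3b}.

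First, record that AC-equivalence is an equivalence relation on balanced presentations. It is reflexive via the empty ledger. It is transitive by concatenating ledgers. It is symmetric because every elementary move has an elementary inverse: inversion undoes itself, $r_i\leftarrow r_ir_j^{\pm1}$ is undone by $r_i\leftarrow r_ir_j^{\mp1}$, conjugation by $g$ is undone by conjugation by $g^{-1}$, and rotation by $k$ is undone by rotation by $-k$. The orbit convention also needs attention. A certificate for $P\AC Q$ only reaches some member of $\Sigma\cdot Q$. Since $\Sigma$ consists of AC-realizable operations (swap via the 6-move identity, inversion, rotation), every member of $\Sigma\cdot Q$ is AC-equivalent to $Q$, so the certificates establish genuine AC-equivalences to the named presentations, as the paper already notes.

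Next, chain the relations. Theorems~\ref{thm:ms3a}--\ref{thm:ms3b} give $\PP{3}\AC\AK(3)$ and $\PP{4}\AC\AK(3)$, so $C_0$ lies in the class of $\AK(3)$. The hypothesis $\PP{1}\AC\AK(3)$ together with Theorem~\ref{thm:sigma1} ($\PP{6}\AC\PP{1}$) puts all of $C_1$ in that class. The hypothesis $\PP{2}\AC\AK(3)$ together with Theorem~\ref{thm:sigma2} ($\PP{5}\AC\PP{2}$) does the same for $C_2$. Hence all six benchmark presentations, including the holdouts $\PP{5},\PP{6}$ and the classical $\PP{1}$, lie in a single AC-class, namely that of $\AK(3)$. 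Theorem~\ref{thm:sigma1} is stated for $\PP{1}$ in its classical form, which equals $\MS(2,x^{-2}y^{-1}x^2y)$ only up to rotation of the second relator. That rotation is in $\Sigma$, so the two forms are AC-equivalent and the hypothesis applies.

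Finally, the claim that ``the remaining question is exactly $\AK(3)$'' follows because AC-trivializability is a class invariant. Under the hypotheses, each of the six presentations is AC-trivializable if and only if $\AK(3)$ is.

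The argument has no real obstacle. The only delicate point is the bookkeeping described above: certificates are relative to $\Sigma$-orbits, and $\PP{1}$ appears in two rotated forms. Both are handled by the AC-realizability of $\Sigma$.
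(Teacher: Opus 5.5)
Your proposal is correct and follows the paper's own (implicit) argument. The corollary is transitivity of AC-equivalence applied to the certified blocks $C_0, C_1, C_2$ together with the two assumed edges $\PP{1}\AC\AK(3)$ and $\PP{2}\AC\AK(3)$, and you handle the $\Sigma$-orbit convention and the rotation identifying the two forms of $\PP{1}$ exactly as \S\ref{sec:conventions} does; your explicit check that AC-equivalence is symmetric (needed, e.g., because the certificate for Theorem~\ref{thm:ms3a} runs from $\AK(3)$ to $\PP{3}$) is bookkeeping the paper leaves unstated.
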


The dependency is now exactly two uncertified claims (reduced from four by
Theorems \ref{thm:ms3a}--\ref{thm:ms3b}), and joining the three blocks
requires two further edges: one connecting each $\MS(2)$ block to $C_0$ (or
one to $C_0$ and the two $\MS(2)$ blocks to each other). (The length-14
stratum as a whole has never been exhaustively classified, so this corollary
concerns the named benchmark.) Producing either explicit
$\MS(2) \to \AK(3)$ certificate would make the corresponding branch
unconditional; \S\ref{sec:minimax} quantifies the computational barrier our
searches have established for such certificates.

\section{Exhaustive minimax analysis:
$\dGS(\MS(3),\AK(3)) = 19$ versus $\dGS(\MS(2),\cdot) \ge 27$}
\label{sec:minimax}

Both search engines operate on the canonical GS-substitution graph of
\cite{Fagan26}: vertices are canonical states (pairs of cyclically reduced
relators, canonicalized up to $\Sigma$), edges are their substitution moves
(cyclically aligned products with a cancelling junction), and the
\emph{energy} of a vertex is its total relator length. For a path $\gamma$,
define its \emph{bottleneck} $B(\gamma) = \max_{P\in\gamma} E(P)$; for
states $S, T$ define $\dGS(S, T) = \min_{\gamma\colon S\leadsto T}
B(\gamma)$. Our bidirectional multi-source bottleneck-Dijkstra
implementation settles states in nondecreasing bottleneck order (with
collision candidates scored on both sides' labels and optimal termination),
so exhaustion of all states below a bound is a complete search of that
energy sublevel. Two implementations --- Python and an $11\times$ faster
C\texttt{++} port --- agree bit-for-bit on regression gates (identical
collision barrier on a known-equivalent pair; identical component counts at
exclusive cap 25).

\begin{theorem}[computer-assisted; engine-relative]\label{thm:minimax}
In the canonical GS-substitution graph, with total cyclically reduced
relator length as vertex energy:
\[
\dGS(\MS(3, yx^2y),\ \AK(3)) = 19,
\]
while
\[
\dGS(\PP{1}, \AK(3)) \ge 27, \qquad
\dGS(\PP{2}, \AK(3)) \ge 27, \qquad
\dGS(\PP{1}, \PP{2}) \ge 27,
\]
where $\PP{1} = \MS(2, x^{-2}y^{-1}x^2y)$ and $\PP{2} = \MS(2,
x^{-2}y^{-1}x^2y^{-1})$ are the searched representatives.
\end{theorem}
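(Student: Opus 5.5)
This is a computer-assisted claim, so the proof consists of two kinds of certified computation: an explicit path for each upper bound, and an exhaustive sublevel-set search for each lower bound.

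\textbf{The equality for $\MS(3,yx^2y)$.} For the upper bound $\dGS(\MS(3,yx^2y),\AK(3))\le 19$, I would exhibit an explicit GS path from the canonical form of $\MS(3,yx^2y)$ to the canonical form of $\AK(3)$ whose maximal total relator length is $19$. This is the canonical-state path that the bidirectional bottleneck-Dijkstra search returns, and it is the same path that was expanded into the $66$-move certificate of Theorem~\ref{thm:ms3a}. Checking it is routine: verify that each consecutive pair is joined by a GS substitution edge after canonicalization, and that every vertex on the path has energy at most $19$.

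For the matching lower bound $\dGS\ge 19$, I would show that $\AK(3)$ and $\MS(3,yx^2y)$ lie in different connected components of the sublevel graph $\{E\le 18\}$. I would compute this by flooding from the $\MS(3,yx^2y)$ side: a breadth-first search over canonical states that admits a state only if its energy is at most $18$. This sublevel set is finite, because there are finitely many reduced pairs of each bounded length. The search terminates when the frontier is empty, and then I check that $\AK(3)$ was never settled. Since the Dijkstra settles states in nondecreasing bottleneck order, its termination with optimal collision value $19$ certifies both bounds at once.

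\textbf{The three lower bounds of $27$.} For $\PP{1},\PP{2}$ versus $\AK(3)$ and versus each other, the argument is the same exhaustion with cap $26$. I would run multi-source flooding from the three seeds $\PP{1}$, $\PP{2}$ and $\AK(3)$, keeping a separate label for each seed, over all canonical states of energy at most $26$. I would then check that the three components are pairwise disjoint, that is, no state receives two labels. Exhausting the finite component of each seed within $\{E\le 26\}$ without any collision shows that every path between two seeds must pass through a vertex of energy at least $27$. The reported state counts, $6{,}212{,}968$ and $13{,}504{,}944$, are the sizes of these exhausted components and serve as reproducibility checksums.

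\textbf{Main obstacle.} The hard step is establishing that the lower-bound searches are genuinely complete: that the engine generates \emph{all} GS-neighbours of every state, and that the canonicalizer is a true invariant of $\Sigma$-orbits. Any missing edge, or any canonicalization that wrongly merges or splits orbits, would invalidate the disjointness claim. This is why the statement is labelled engine-relative.

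I would address this in three ways. First, I would cross-check two independent implementations (Python and C\texttt{++}) for bit-for-bit agreement on component counts at several caps. Second, I would run regression gates on pairs known to be connected, confirming that their collision barrier is reproduced. Third, I would run a brute-force neighbour enumeration on small states to check that the fast move generator matches the GS move definition of \cite{Fagan26}. Memory for the roughly $10^7$-state exhaustions is a practical rather than a logical issue.
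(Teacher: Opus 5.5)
Your proposal is correct and follows the paper's own argument. The paper runs a bottleneck-Dijkstra from $\{\AK(3),\MS(3,yx^2y)\}$ that collides at barrier 19 after settling all lower-barrier states, and the barrier-19 path is the one expanded into the 66-move certificate. For the bounds of 27, it exhausts the energy-$\le 26$ sublevel from the three seeds (its ``exclusive-cap-27'' run) with no cross-source contact.

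One small correction: the two state counts come from separate runs. $6{,}212{,}968$ is the total for the energy-$\le 25$ exhaustion (exclusive cap 26), which alone gives only $\ge 26$. $13{,}504{,}944$ is the total for the energy-$\le 26$ exhaustion, and only that run is needed for the stated bounds.
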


The bounds are stated for these two presentations only: their block-mates
$\PP{6}$ and $\PP{5}$ are AC-equivalent to them by Theorems
\ref{thm:sigma1}--\ref{thm:sigma2}, but elementary AC-equivalence does not
transfer $\dGS$ bounds, so no bound is asserted for the other
representatives. Thus the two searched $\MS(2)$ representatives have GS
bottleneck at least eight total-length units above the exactly determined
$\MS(3)$ bottleneck.

\begin{proof}[Computer-assisted proof]
Upper bound and exactness for $\MS(3)$: the minimax search from
$\{\AK(3),\ \MS(3,yx^2y)\}$ collided at barrier 19 after settling all states
of lower barrier; the barrier-19 GS path expands to the verified 66-move
certificate of Theorem~\ref{thm:ms3a}. Lower bounds: the exclusive-cap-26
search from the three sources $\{\AK(3),\ \PP{1},\ \PP{2}\}$ exhausted
6{,}212{,}968 canonical states without any cross-source intersection,
excluding every GS path of bottleneck $\le 25$; the exclusive-cap-27 search
exhausted 13{,}504{,}944 states, excluding bottleneck $\le 26$. A cap-28 run
reached its 40-million-state budget while processing barrier 27 and is
inconclusive for bottleneck 27.
\end{proof}

\paragraph{Scope restrictions (essential).}
These are statements about the GS-substitution graph: they are not
AC-distance lower bounds, not elementary-path peak bounds, not move-count
bounds, and not evidence of AC-inequivalence. The GS generator emits only
products with a cancelling junction, so legal elementary multiplications
without such a junction are outside this graph; the corridor joining
$\PP{1}$ and $\PP{2}$ to $\AK(3)$ --- which exists if the two remaining
claims of \cite{Shehper24} are correct --- may open at GS bottleneck 27, at
a higher level, or only through non-GS elementary moves. Likewise, 19 is the
exact minimax barrier \emph{in the GS graph}; this does not prove 19 optimal
among elementary AC paths, nor 66 moves a shortest certificate --- the
66-move certificate is \emph{derived from} an optimal-barrier GS path (its
expanded elementary peak, 24, is a separate quantity). All runs are
computer-assisted; source hashes, run records, and regression gates ship in
the archive.

\paragraph{Structural reading.}
The asymmetry has a local explanation visible in first neighborhoods: the
$\MS(3)$ forms (relator split $9+5$) have GS moves climbing only one unit
above their starting energy 14, while every first GS move from an $\MS(2)$
form (split $7+7$) jumps to energy 17 or 19. The exhaustion data also
separates the two searched $\MS(2)$ representatives: the sublevel region
reachable from $\PP{2}$ grows by $\times 5.97$ from energy $\le 25$ to
$\le 26$ ($424{,}352 \to 2{,}534{,}472$ settled states) versus
$\times 2.39$ for $\PP{1}$ ($356{,}836 \to 851{,}836$), and at the
budget-terminated barrier-27 processing the partial settled counts are
$5{,}640{,}276$ ($\PP{2}$) versus $1{,}987{,}856$ ($\PP{1}$) --- different
basin geometry, not merely a later copy of the $\MS(3)$ corridor. (All
figures re-derived from the committed run records by
\code{validate\_bottleneck.py}.)

\paragraph{Independent re-derivation of the bounded enumerations.}
At exclusive cap 25 the search settles exactly 135{,}892 states from the
$C_1$ representative and 168{,}304 from the $C_2$ representative ---
reproducing, with an independent search implementation (shared move
generator), the engine-relative component enumerations previously reported
for these classes, and adding the $\AK(3)$-side count of 1{,}532{,}732
states below total length 25. The three blocks are pairwise disjoint below
total length 25, 26, and (per the above) their pairwise joins require energy
$\ge 27$.

\section{The Two-Hump classification table and the $\sigma$-merge program}
\label{sec:twohump}

\cite{Fagan26} groups its 550 unsolved presentations into \emph{261
equivalence classes} (their Tables 2--3) without publishing the
classification algorithm or any connecting paths. We reverse-engineered what
the partition \emph{can} and \emph{cannot} be (script \code{reverse261.py};
hard assertion gates: 170 rows, 550 labeled cells, all 261 published
per-class occurrence counts reproduced exactly, $0/550$ length changes under
cyclic reduction).

\paragraph{Not a tested symmetry quotient.}
Canonical forms under (H1) rotation + inversion + swap give 550 distinct
keys; adding generator inversions (H2) or all signed permutations (H3) gives
275 keys that both split and merge their classes. None of the three
reproduces the partition; untested quotients cannot be excluded, but the
fingerprints point elsewhere: 354 of the 550 members carry class labels
shorter than their own (cyclically reduced) length, and their class
$14_2$ contains six members with three distinct H2-keys. The most
plausible explanation is that the table records connection information
computed in the course of their search. This constitutes strong evidence of
unpublished AC-connections --- including, plausibly, a path for our
Theorem~\ref{thm:sigma1} pair, since their table assigns both presentations
the same class label. Accordingly, the novelty we claim for Theorems
\ref{thm:sigma1}--\ref{thm:ms3b} is public replayability, not priority over
unpublished computations.

\paragraph{The $\sigma$-merge program.}
The 550 unsolved presentations pair perfectly under $\sigma$ into 275 pairs;
61 intra-class, \emph{214 crossing their class boundaries}. Joining their
partition with the $\sigma$-edges yields 133 components (validated in
\code{reverse261.py}).

\begin{corollary}[conditional]\label{cor:sigmamerge}
If $\sigma$ is AC-realized on all 214 cross-label pairs, the 261 classes of
\cite{Fagan26} collapse to at most 133.
\end{corollary}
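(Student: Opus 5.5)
The corollary should follow from elementary bookkeeping on two finite partitions, with no AC moves needed. First I fix the data. Let $U$ be the set of 550 unsolved presentations. The table of \cite{Fagan26} partitions $U$ into 261 classes, and I write $\ell(P)$ for the label of $P$. The automorphism $\sigma$, acting on canonical forms up to $\Sigma$, is an involution on $U$ whose 275 orbits are the $\sigma$-pairs (this pairing is validated in \code{reverse261.py}). I form a graph $G$ on $U$ with two kinds of edges: \emph{class edges} joining members with the same label, and \emph{$\sigma$-edges} joining $P$ to $\sigma(P)$. The stated count of 133 components is exactly the number of connected components of $G$, i.e.\ of the join of the class partition with the $\sigma$-pairing. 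I take that count from the validated script as a finite computation.

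Next I argue that, under the hypothesis, every edge of $G$ joins AC-equivalent presentations. I take the class edges as given, since each class is read as an AC-equivalence class of \cite{Fagan26}; the statement counts their classes, so that interpretation is part of the hypothesis. For the $\sigma$-edges there are two kinds. The 214 cross-label pairs are AC-equivalent by assumption. The 61 intra-class pairs already lie in one class, so their edges add nothing. AC-equivalence is transitive, so each component of $G$ lies inside a single AC-class. The induced partition of $U$ into AC-classes is therefore coarser than or equal to the component partition. It thus has at most as many blocks as $G$ has components, which is at most 133; the 261 labels collapse to at most 133 merged classes.

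The mathematics here is trivial. The main obstacle is the status of the inputs. First, one must make explicit that the 261 classes of \cite{Fagan26} are taken to be AC-equivalence classes, which the corollary implicitly assumes. If the labels encoded something weaker, the merge would only bound the number of classes in the join, not the number of AC-classes. Second, the count 133 rests on \code{reverse261.py} correctly pairing all 550 cells under $\sigma$, up to the same canonicalization used for the labels. For that I would cite the script's hard assertion gates: 550 cells, a perfect pairing, and 61/214 intra/cross pairs. I would phrase the result as ``at most'', because further unpublished connections could merge components even more.
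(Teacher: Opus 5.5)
Your proposal is correct and matches the paper's argument. The paper observes that the join of the 261-class partition with the 275 $\sigma$-edges has 133 components (validated in \code{reverse261.py}), so AC-realizing $\sigma$ on the 214 cross-label pairs, with the 61 intra-class pairs adding nothing, leaves at most 133 merged classes. Your caveat that the table's classes must be read as AC-connected sets makes explicit an assumption the paper leaves implicit.
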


Each certified cross-label $\sigma$-pair is an individually publishable
class merger. A pilot sweep (their-greedy engine, strict caps $L+11$): the
ten shortest cross-label pairs all \emph{exhaust} their bounded components
without contact --- engine-relative disconnections, consistent with
\S\ref{sec:minimax}'s finding that $\sigma$-realization on these classes
requires substantial energy excursions; the full stratified sweep is future
work. (Forward and reverse hunts are $\sigma$-mirror images --- proven by
the length-preserving automorphism argument and confirmed by identical state
counts --- halving the program's cost.)

\section{Negative results and computational practice notes}
\label{sec:negative}

\paragraph{Finite quotients can never obstruct these classes.}
By Borovik--Lubotzky--Myasnikov \cite{BLM11} the AC-components of normally
generating pairs in any finite group are determined by abelianization; the
exponent matrices of all presentations above are unimodular, so their images
lie in the generating pair's component in every finite quotient. Our
computational sieve over nine finite groups illustrates the theorem.
Obstructions, if any exist, must come from infinite quotients or genuinely
noncommutative invariants.

\paragraph{Unguided saturation fails far below the frontier.}
Prover9 \cite{McC}, on Lisitsa's implication-goal encoding of
AC-reachability (which has trivialized instances needing 1000+ moves under
expert guidance \cite{Lis18, Lis24, Lis25}), cannot re-prove a known
167-move trivialization in 2~h; the E prover \cite{Schulz19} (auto and
auto-schedule) fails in 15~min. Bounded, structured search
--- not saturation --- is currently the only working route at this scale.

\paragraph{A Prover9 pitfall worth recording.}
On Horn reachability encodings whose goal is a disjunction (e.g.\
``trivializable $\vee$ reaches $\AK(3)$''), clausification of the negated
goal yields one denial (negative clause) per disjunct; Prover9's default
\code{auto\_denials} setting then treats these as separate conclusions and
raises \code{max\_proofs} to their number (its log prints
\code{\%\ assign(max\_proofs, 4)}). This is documented behavior --- the
manual states that multiple denials in a Horn set are assumed to be
``separate conclusions'' with ``a separate proof of each''
\cite{McC} --- but for a disjunctive goal it silently changes the success
criterion from ``prove any'' to ``prove all.'' In our four-disjunct sanity
instance the unfixed run finds refutations of two disjuncts within
$0.01$~s (\code{\%\ Proof 1}/\code{2} blocks in the log) yet continues
searching for the remaining conclusions, exhausts its budget, and exits
SEARCH FAILED --- easy to misread as total failure, since the THEOREM
PROVED banner is printed only on successful termination. Under
\code{clear(auto\_denials).} \code{assign(max\_proofs,1).} the same input
terminates with THEOREM PROVED in $0.02$~s; single-disjunct goals are
unaffected. We initially misattributed these failures to encoding
fragility; the corrected calibration matrix and the demonstrating outputs
are in the repository.

\paragraph{Elementary-level cross-validation of \cite{Fagan26}.}
Our expansion pipeline regenerated their greedy solutions for two solved
instances (43 and 37 supermoves, the former matching their published table
exactly) and produced composed elementary ledgers passing our verifier (114
and 97 packaged moves). This validates their published solutions at the
elementary-move level, independently of their code.

\paragraph{Invalidated artifacts are retained, quarantined.}
An earlier attempt at an independent elementary-move class enumeration was
caught incomplete by its own validation gate; its artifacts are quarantined
in the archive (\code{invalidated/}) with a failure record. The
\S\ref{sec:minimax} re-derivation supersedes it for the component counts.

\section{Reproduction, audit scope, and data availability}\label{sec:repro}

One-command checking of the self-contained claims from the artifact archive
(\code{./repro.sh}, no failures masked; third-party-dependent analyses are
skipped, with a truthful PARTIAL banner, until their inputs are
regenerated): Theorems \ref{thm:sigma1}--\ref{thm:ms3b} replay; verifier
self-tests + accounting regression; Table-2 analysis with hard validation
gates; the Theorem~\ref{thm:minimax} record validator
(\code{validate\_bottleneck.py}: consistency of every \S\ref{sec:minimax}
figure against the committed run records, source hash included); the
committed AC-19 membership audit (\code{ac19\_audit.py}, hard assertions
under \cite{Fagan26}'s own canonicalizer: the positive anchor u066 present
in AC-19; all six benchmark presentations and $\AK(3)$ asserted absent from
both AC-19 (140{,}535 entries) and its extended release (156{,}762); dataset
hashes pinned). The archive contains: the verifier and self-test; all
certificates; both search engines (\code{ak\_bottleneck.py}, \code{bn.cpp})
with their regression gates;
\code{MANIFEST.json} (full SHA-256 hashes, commands, expected outputs) and
\code{LICENSE}; run records including the frozen $\sigma$-pilot and the
\S\ref{sec:minimax} exhaustion records; provenance notes (with SHA-256
hashes) for the third-party inputs used by the parsers; the quarantined
invalidated artifacts with their failure records; and the claims ledger
(\code{RESULTS.md}) documenting every retracted claim and its correction.
Repository: \url{https://github.com/joe-carr-data/ac-certificates} (release tag \code{v1.0}); permanent archive: \url{https://doi.org/10.5281/zenodo.21499081} (DOI \code{10.5281/zenodo.21499081}).

\paragraph{Audit scope for the novelty statement (as of 2026-07-22).}
arXiv (2408.15332 v1/v2; 2606.21611 incl.\ appendices; 1609.00325), the
AC-Solver repository (HEAD and history), its W\&B artifacts, the ACSolverX
repository and both AC-19 releases (membership audited under their
canonicalizer), the MATH-AI @ NeurIPS 2025 workshop paper on this problem
family, recorded talks, and a commissioned deep-research literature sweep.
``First public'' means: not found in any of these; an unindexed or private
artifact cannot be excluded, and \cite{Fagan26}'s table is itself evidence
that stronger unpublished computations exist (\S\ref{sec:twohump}).

\paragraph{AI assistance disclosure.}
Search code, certificates, verification tooling, and this manuscript were
produced in a human-directed collaboration with AI systems; every
mathematical claim rests on the replayable certificates and scripts in the
archive, not on AI assertion. The development record, including all
corrected intermediate claims, is documented in the archived claims ledger
(\code{RESULTS.md}).

\end{document}